\numberwithin{equation}{subsection}
\patchcmd{\thesubsection}{\arabic}{\Alph}{}{}
\patchcmd{\@seccntformat}{\@secnumfont}{%
  \@secnumfont\expandafter\protect\csname format#1\endcsname}{}{}
\patchcmd{\@startsection}{\@afterindenttrue}{\@afterindentfalse}{}{}
\patchcmd{\subsection}{-.5em}{.3\linespacing}{}{}
\theoremstyle{plain}
\newtheorem{theorem}{Theorem}[section]
\newtheorem{proposition}[theorem]{Proposition}
\newtheorem{lemma}[theorem]{Lemma}
\newtheorem{corollary}[theorem]{Corollary}
\theoremstyle{remark}
\newtheorem{remark}[theorem]{Remark}
\newcommand{\sr}{\ensuremath{\mathrm{s}}}
\newcommand{\tg}{\ensuremath{\mathrm{t}}}
\newcommand{\Repr}[2][]{\ensuremath{\mathbf{Rep}_{#1}(#2)}}
\newcommand{\Hom}[3][]{\ensuremath{\mathrm{Hom}_{#1} (#2, #3)}}
\newcommand{\restrict}[2]{\ensuremath{#1\vert_{#2}}}
\newcommand{\End}[2][]{\ensuremath{\mathrm{End}_{#1} (#2)}}
\newcommand{\Limp}{\ensuremath{\Rightarrow}}
\newcommand{\id}[1]{\ensuremath{\mathbf{1}_{#1}}}
\renewcommand{\dim}[2][]{\ensuremath{\mathrm{dim}_{#1}(#2)}}
\newcommand{\N}{\ensuremath{\mathbf{N}}}
\newcommand{\Q}{\ensuremath{\mathbf{Q}}}
\newcommand{\R}{\ensuremath{\mathbf{R}}}
\newcommand{\set}[1]{\ensuremath{\{ #1 \}}}
\newcommand{\suchthat}{\ensuremath{\, \vert \,}}
\newcommand{\C}{\ensuremath{\mathbf{C}}}
\newcommand{\tr}[1]{\ensuremath{\mathrm{Tr}(#1)}}
\newcommand{\card}[1]{\ensuremath{\mathrm{card} (#1)}}
\newcommand{\pair}[2]{\ensuremath{\langle #1, #2 \rangle}}
\newcommand{\Lie}[1]{\ensuremath{\mathrm{Lie}(#1)}}
\newcommand{\norm}[1]{\ensuremath{\lVert#1\rVert}}
\newcommand{\Aut}[2][]{\ensuremath{\mathrm{Aut}_{#1} (#2)}}
\newcommand{\units}[1]{\ensuremath{#1^\times}}
\renewcommand{\bar}[1]{\ensuremath{\overline{#1}}}
\newcommand{\tang}[2][]{\ensuremath{\mathrm{T}_{#1}(#2)}}
\DeclareMathOperator{\diff}{d}
\begin{document}

\title[Properness of moment maps for representations of quivers]
{Properness of moment maps for representations of quivers}

\author[Pradeep Das]{Pradeep Das}

\email{pradeepdas@hri.res.in}

\address{Harish-Chandra Research Institute \\ HBNI \\ Chhatnag Road \\ Jhunsi \\ Allahabad 211~019 \\ India}

\keywords{Quiver representations, Proper Moment maps}

\subjclass[2010]{16G20, 53D20}

\begin{abstract}
  In this note, we give a necessary and sufficient condition for the
  properness of moment maps for representations of quivers.  We show
  that the moment map for the representations of a quiver is proper
  if and only if the quiver is acyclic, that is, it does not
  contain any oriented cycle.
\end{abstract}

\maketitle

\section*{Introduction}
\label{sec:introduction}

Moment maps play an important role in the study of the geometry of
moduli spaces and quotient spaces.  For example, the Morse theory of
the norm-square of the moment map for a symplectic action of a compact
Lie group on a symplectic manifold determines the Morse stratification
of the manifold \cite{FK}.  Moment maps are the key ingredients of
Marsden-Weinstein reduced spaces, which include, as important examples, several kinds of moduli spaces.  In \cite{SL}, Sjamaar and
Lerman have shown that singular Marsden-Weinstein reduced spaces are
symplectic stratified spaces, without any assumption on the moment
map.  In \cite{RS}, Sjamaar shows that the reduced space of a
reductive group action on a K\"ahler manifold is a complex space and
exhibits a K\"ahler stratification if the moment map is admissible,
where by admissibility we mean that for every point of the manifold,
the path of steepest descent through that point is contained in a
compact set.  He further shows that there exists a positive
holomorphic line bundle (in the sense of Grauert) on this reduced
space if the moment map is proper, thus proving that the reduced space
is a complex projective variety.  A proper moment map is always
admissible.

Several authors have studied the moment maps for the representations
of quivers.  In \cite{WCB}, Crawley-Boevey considers algebraic moment
maps on the cotangent bundle of the representation space of a quiver,
and has given a criterion for their flatness, and a stratification of
their Marsden-Weinstein reductions.  Using gradient flow lines for the
norm-square of the moment map, Wilkin, in \cite{GW}, has obtained a
Hecke correspondence which is used in the construction of the
generators of Kac-Moody algebras.  Harada and Wilkin \cite{HW} have
studied the Morse theory of the norm-square of the moment map in
detail.  They have shown that the moment map for the representations
of quivers is indeed admissible.  In \cite[Proposition 3.1]{HP}, Hausel and Proudfoot give a
sufficient condition for the properness of the moment map for quiver
representations.

In \cite{PMR}, the authors have studied some holomorphic and
symplectic aspects of representations of finite quivers.  Using the
formalism of that paper, we show that the sufficient condition for
the properness of the moment map given by Hausel and Proudfoot is
also necessary.  We also give another proof of the sufficiency.

The first section is devoted to the preliminaries required to prove
the main results, namely Proposition \ref{prop1} and Proposition
\ref{prop2}, of this note.  The details of the contents of Section
\ref{sec1} can be found in \cite{PMR}.

\section{Preliminaries}
\label{sec1}

\subsection{Quivers and their representations}
\label{sec:1.1}

A \emph{quiver} $Q$ is a quadruple $(Q_0,Q_1,\sr,\tg)$, where $Q_0$
and $Q_1$ are sets, and $\sr: Q_1\to Q_0$, and $\tg: Q_1\to Q_0$ are
functions.  The elements of $Q_0$(resp., $Q_1$) are called the
\emph{vertices}(resp., \emph{arrows}) of $Q$.  For any arrow $\alpha$
of $Q$, the vertex $\sr(\alpha)$(resp., $\tg(\alpha)$) is called the
\emph{source}(resp., \emph{target}) of $\alpha$.  If $\sr(\alpha)=a$
and $\tg(\alpha)=b$, then we say that $\alpha$ is an arrow
\emph{from} $a$ \emph{to} $b$, and write $\alpha:a\to b$.  The quiver
$Q$ is said to be \emph{vertex-finite}(resp. \emph{arrow-finite}) if
the set $Q_0$(resp.,$Q_1$) is finite, and \emph{finite} if it is both
vertex-finite and arrow-finite.  The quiver
$(\emptyset,\emptyset,\sr,\tg)$, where $\sr$ and $\tg$ are the empty
functions, is called the \emph{empty} quiver.  We say that a quiver
$Q$ is \emph{non-empty} if it is not equal to the empty quiver, or
equivalently, if the set $Q_0$ of its vertices is non-empty.

A \emph{path of length $l \geq 1$ from $a$ to $b$} in $Q$ is a
sequence $\set{\alpha_k}_{k = 1}^{l}$, where $\alpha_k \in Q_1$ for
all $1 \leq k \leq l$, with
$\sr(\alpha_1) = a, ~\tg(\alpha_k) = \sr(\alpha_{k+1})$ for each
$1 \leq k < l$, and finally $\tg(\alpha_l) = b$.  Such a path is
denoted briefly by $\alpha_1\alpha_2 \cdots \alpha_l$, and may be
visualised as follows
$$a = a_0
\overset{ \alpha_1} \longrightarrow  a_1
\overset{ \alpha_2} \longrightarrow  a_2
\longrightarrow \cdots
\overset{ \alpha_l} \longrightarrow  a_l
=b.$$
A path of length $l \geq 1$ is called a \emph{cycle} whenever its
source and target coincide.  A cycle of length $1$ is called a
\emph{loop}.  A quiver is called \emph{acyclic} if it contains no
cycles.

Let $k$ be a field.  A \emph{representation of $Q$ over $k$} is a pair
$(V,\rho)$, where $V=(V_a)_{a\in Q_0}$ is a family of
finite-dimensional $k$-vector spaces, and
$\rho=(\rho_\alpha)_{\alpha\in Q_1}$ is a family of $k$-linear maps
$\rho_\alpha : V_{\sr(\alpha)} \to V_{\tg(\alpha)}$.  For every
representation $(V,\rho)$, the element
\begin{equation*}
  \label{eq:68}
  \dim{V,\rho} = (\dim[k]{V_a})_{a\in Q_0}
\end{equation*}
of $\N^{Q_0}$ is called the \emph{dimension vector} of $(V,\rho)$.
If $(V,\rho)$ and $(W,\sigma)$ are two representations of $Q$, then a
\emph{morphism from $(V,\rho)$ to $(W,\sigma)$} is a family
$f=(f_a)_{a\in Q_0}$ of $k$-linear maps $f_a:V_a\to W_a$, such that
$f_{\tg(\alpha)} \circ \rho_\alpha =
 \sigma_\alpha \circ f_{\sr(\alpha)}$, for each $\alpha\in Q_1$.  If
$(V,\rho)$, $(W,\sigma)$, and $(X,\tau)$ are three representations
of $Q$, $f$ a morphism from $(V,\rho)$ to $(W,\sigma)$, and $g$ a
morphism from $(W,\sigma)$ to $(X,\tau)$, then the \emph{composite} of
$f$ and $g$ is the family $g\circ f$ defined by
$g\circ f = (g_a\circ f_a)_{a\in Q_0}$.  The composite of $f$ and $g$
is a morphism from $(V,\rho)$ to $(X,\tau)$.  We thus get a category
$\Repr[k]{Q}$, whose objects are representations of $Q$ over $k$, and
whose morphisms are defined as above.

%A \emph{subrepresentation} of a representation $(V,\rho)$ of $Q$ is a
%subobject of $(V,\rho)$ in the category $\Repr[k]{Q}$.  

\subsection{The group action}

Let $Q$ be a non-empty finite quiver.  From now onward we will consider only complex representations of $Q$. Let $d=(d_a)_{a\in Q_0}$
be a non-zero element of $\N^{Q_0}$, and fix a family
$V=(V_a)_{a\in Q_0}$ of $\C$-vector spaces, such that
$\dim[\C]{V_a}=d_a$ for all $a\in Q_0$.

Let $\mathcal{A}$ denote the finite-dimensional $\C$-vector space
$\bigoplus_{\alpha\in Q_1}\Hom[\C]{V_{\sr(\alpha)}}{V_{\tg(\alpha)}}$.
Then, for each $\rho \in \mathcal{A}$, we have a representation
$(V,\rho)$ of $Q$.  conversely, for every representation $(W,\sigma)$
of $Q$, such that $\dim{W,\sigma}=d$, there exists an element $\rho$
of $\mathcal{A}$, such that the representations $(V,\rho)$ and
$(W,\sigma)$ are isomorphic.  We give the vector space $\mathcal{A}$,
usually called the representation space of $Q$ with dimension vector
$d$, the usual topology, and the usual structure of a complex
manifold.

Let $G$ be the complex Lie group $\prod_{a\in Q_0}\Aut[\C]{V_a}$.
There is a canonical holomorphic linear right action
$(\rho,g)\mapsto \rho g$ of $G$ on $\mathcal{A}$, which is defined by
\begin{equation*}
  \label{eq:231}
  (\rho g)_\alpha =
  g_{\tg(\alpha)}^{-1} \circ \rho_\alpha \circ g_{\sr(\alpha)}
\end{equation*}
for all $\rho\in \mathcal{A}$, $g\in G$, and $\alpha\in Q_1$.  For all
$\rho,\sigma \in \mathcal{A}$ and $g\in G$, we have $\sigma = \rho g$
if and only if $g$ is an isomorphism of representations of $Q$, from
$(V,\sigma)$ to $(V,\rho)$.  In other words, two points $\rho$ and
$\sigma$ of $\mathcal{A}$ lie on the same orbit of $G$ if and only if
the representations $(V,\rho)$ and $(V,\sigma)$ of $Q$ are isomorphic.
Thus, the map which takes every point $\rho$ of $\mathcal{A}$ to the
representation $(V,\rho)$ induces a bijection from the quotient set
$\mathcal{A}/G$ onto the set of isomorphism classes of representations
$(W,\sigma)$ of $Q$, such that $\dim{W,\sigma} = d$.

Denote by $H$ the central complex Lie subgroup of $G$ consisting of
all elements of the form $ce$, as $c$ runs over $\units{\C}$, where
$e=(\id{V_a})_{a\in Q_0}$ is the identity element of $G$.  Let
$\bar{G}$ denote the complex Lie group $H\backslash G$,
$\pi : G \to \bar{G}$ the canonical projection.  Note that the action
of $G$ on $\mathcal{A}$ induces a holomorphic right action of
$\bar{G}$ on $\mathcal{A}$.

The Lie algebra $\Lie{G}$ of $G$ is the direct sum Lie algebra
$\bigoplus_{a\in Q_0} \End[\C]{V_a}$, where, for each $a\in Q_0$, the
associative $\C$-algebra $\End[\C]{V_a}$ is given its usual Lie
algebra structure.  Note that $\Lie{G}$ has a canonical structure of
an associative unital $\C$-algebra, and that $G$ is the group of units
of the underlying ring of $\Lie{G}$, and is open in $\Lie{G}$.  The
Lie algebra of $H$ is the Lie subalgebra of $\Lie{G}$ consisting of
all elements of the form $ce$, as $c$ runs over $\C$.

\subsection{Moment maps}
\label{sec:moment-maps}

Let $(X,\Omega)$ be a smooth symplectic manifold, and $K$ a real Lie
group.  Suppose we are given a smooth symplectic right action of $K$
on $X$.  A \emph{moment map} for the action of $K$ on $X$ is a smooth
map $\Phi : X \to \Lie{K}^*$, which is $K$-invariant for the coadjoint
action of $K$ on $\Lie{K}^*$, and has the property that
$H(\Phi^\xi)=\xi^\sharp$ for all $\xi \in \Lie{K}$, where $\Phi^\xi$
is the smooth real function $x\mapsto \Phi(x)(\xi)$ on $X$,
$\xi^\sharp$ is the vector field on $X$ induced by $\xi$, and  the
symbol $H(f)$ denotes the Hamiltonian vector field of a smooth real
function $f$ on $X$, which is, by definition, the unique smooth vector
field on $X$, such that $\Omega(x)(H(f)(x),w) = w(f)$ for all $x\in X$
and $w\in \tang[x]{X}$, $\tang[x]{X}$ being the tangent space of $X$
at $x$.

Let $\Omega$ be a symplectic form on a finite-dimensional $\R$-vector
space $V$.  Since the tangent space of $V$ at any point is canonically
isomorphic to $V$ itself, $\Omega$ defines a smooth $2$-form on $V$,
denoted by $\Omega$ itself.  In any linear coordinate system on $V$,
$\Omega$ can be expressed as a form with constant coefficients, so
$\diff \Omega = 0$.  Therefore, $(V,\Omega)$ is a symplectic manifold.

Suppose that we are given a smooth symplectic linear right action of a real lie group $K$ on $V$.  For any element $\xi$ of $\Lie{K}$, let
$\xi^\sharp$ be the vector field on $V$ induced by $\xi$; then
$\xi^\sharp$ is an $\R$-endomorphism of $V$, and
$\Omega(\xi^\sharp(x),y)+\Omega(x,\xi^\sharp(y)) = 0$ for all
$x,y\in V$.  For each element $\alpha$ of $\Lie{K}^*$, define a map
$\Phi_\alpha : V \to \Lie{K}^*$ by
\begin{equation*}
  \label{eq:341}
  \Phi_\alpha(x)(\xi) =
  \frac{1}{2}\Omega(\xi^\sharp(x),x) + \alpha(\xi)
\end{equation*}
for all $x\in V$ and $\xi\in \Lie{K}$.

\begin{lemma} \cite[Lemma 6.1]{PMR}
  \label{lemma1}
  The map $\alpha \mapsto \Phi_\alpha$ is a bijection from the set of
  $K$-invariant elements of $\Lie{K}^*$ onto the set of moment maps
  for the action of $K$ on $V$.
\end{lemma}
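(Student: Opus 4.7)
The plan is to decompose the verification into four steps: establish the Hamiltonian condition for every $\alpha$, characterize $K$-equivariance in terms of $K$-invariance of $\alpha$, check injectivity, and check surjectivity. For the first step, I would differentiate $\Phi_\alpha^\xi(x) = \tfrac{1}{2}\Omega(\xi^\sharp(x), x) + \alpha(\xi)$ at $x$ in the direction $w \in V$, identifying $\tang[x]{V}$ with $V$. The constant $\alpha(\xi)$ contributes nothing, and bilinearity of $\Omega$ together with linearity of $\xi^\sharp$ yields
\[
  w(\Phi_\alpha^\xi) = \tfrac{1}{2}\bigl(\Omega(\xi^\sharp(w), x) + \Omega(\xi^\sharp(x), w)\bigr).
\]
Combining the infinitesimal symplectic identity $\Omega(\xi^\sharp(u), v) + \Omega(u, \xi^\sharp(v)) = 0$ with skew-symmetry of $\Omega$ gives $\Omega(\xi^\sharp(w), x) = \Omega(\xi^\sharp(x), w)$, hence $w(\Phi_\alpha^\xi) = \Omega(\xi^\sharp(x), w)$. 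By the defining property of the Hamiltonian vector field, this is exactly $H(\Phi_\alpha^\xi)(x) = \xi^\sharp(x)$, so the Hamiltonian condition holds for every $\alpha$.

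For the coadjoint-equivariance step, I would use the transformation rule for fundamental vector fields of a linear right action, $\xi^\sharp(xk) = (\Ad(k)\xi)^\sharp(x) \cdot k$, derived from $k \exp(t\xi) = \exp(t\,\Ad(k)\xi)\,k$ together with linearity of the $K$-action on $V$. Combined with $\Omega(uk, vk) = \Omega(u, v)$, this yields
\[
  \tfrac{1}{2}\Omega(\xi^\sharp(xk), xk) = \tfrac{1}{2}\Omega((\Ad(k)\xi)^\sharp(x), x),
\]
so that $\Phi_\alpha(xk)(\xi) - \Phi_\alpha(x)(\Ad(k)\xi) = \alpha(\xi) - \alpha(\Ad(k)\xi)$. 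The coadjoint-equivariance $\Phi_\alpha(xk)(\xi) = \Phi_\alpha(x)(\Ad(k)\xi)$ therefore holds for all $x, k, \xi$ if and only if $\alpha \circ \Ad(k) = \alpha$ for every $k$, i.e., $\alpha$ is $K$-invariant.

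Injectivity of $\alpha \mapsto \Phi_\alpha$ is immediate from $\Phi_\alpha(0) = \alpha$. For surjectivity, given a moment map $\Phi$, I would set $\alpha := \Phi(0)$; since the linear action fixes $0$, the equivariance of $\Phi$ forces $\alpha$ to be $K$-invariant. The first step then shows that $\Phi$ and $\Phi_\alpha$ have the same Hamiltonian vector field $\xi^\sharp$ for every $\xi$, so by nondegeneracy of $\Omega$ the difference $\Phi^\xi - \Phi_\alpha^\xi$ is locally constant on the connected space $V$; its value at $0$ vanishes by construction, giving $\Phi = \Phi_\alpha$. The main obstacle will be the equivariance step: once the transformation law for $\xi^\sharp$ under a right action and the sign conventions for the coadjoint action are handled consistently, everything else reduces to bilinearity, skew-symmetry, and symplectic invariance.
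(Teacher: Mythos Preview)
The paper does not supply a proof of this lemma: it is merely stated with a reference to \cite[Lemma~6.1]{PMR}, so there is no argument here against which to compare yours. Your proof is correct and follows the standard route one would expect: differentiate the quadratic form to verify the Hamiltonian condition, reduce equivariance of $\Phi_\alpha$ to $K$-invariance of the constant term $\alpha$ via the transformation law for fundamental vector fields under a linear right action, read off injectivity from $\Phi_\alpha(0)=\alpha$, and obtain surjectivity by showing that any moment map differs from $\Phi_{\Phi(0)}$ by a locally constant function vanishing at the origin. The only point worth watching is terminological: the paper's phrase ``$K$-invariant for the coadjoint action'' is what you (and most authors) would call $K$-equivariance of $\Phi$ with respect to the given action on $V$ and the coadjoint action on $\Lie{K}^*$; your computation handles exactly this condition, so there is no discrepancy.
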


We fix a family $h=(h_a)_{a\in Q_0}$ of Hermitian inner products
$h_a : V_a \times V_a \to \C$.  Thus, for every point
$\rho\in \mathcal{A}$, $h$ is a Hermitian metric on the representation
$(V,\rho)$ of $Q$.

For any two finite-dimensional Hermitian inner product spaces $V$ and
$W$, we have a Hermitian inner product $\pair{\cdot}{\cdot}$ on the
$\C$-vector space $\Hom[\C]{V}{W}$, which is defined by
$\pair{u}{v} = \tr{u\circ v^*}$ for all $u,v\in \Hom[\C]{V}{W}$,
where, $v^* : W \to V$ is the adjoint of $v$.  

In particular, the family $h$ induces a Hermitian inner product
$\pair{\cdot}{\cdot}$ on the $\C$-vector space $\Hom[\C]{V_a}{V_b}$
for all $a,b\in Q_0$.  We give $\Lie{G}$ the Hermitian inner product
$\pair{\cdot}{\cdot}$ which is the direct sum of the Hermitian inner
products $\pair{\cdot}{\cdot}$ on $\End[\C]{V_a}$ as $a$ runs over
$Q_0$.  Similarly, we give $\mathcal{A}$ the Hermitian inner product
$\pair{\cdot}{\cdot}$ which is the direct sum of the Hermitian inner
products $\pair{\cdot}{\cdot}$ on
$\Hom[\C]{V_{\sr(\alpha)}}{V_{\tg(\alpha)}}$ as $\alpha$ runs over
$Q_1$.

For each $\rho \in \mathcal{A}$, the $\C$-vector space
$\tang[\rho]{\mathcal{A}}$ is canonically isomorphic to $\mathcal{A}$.
We thus get a Hermitian metric $g$ on the complex manifold
$\mathcal{A}$, defined by $g(\rho)(\sigma,\tau) = \pair{\sigma}{\tau}$
for all $\rho,\sigma,\tau\in \mathcal{A}$, where $\pair{\cdot}{\cdot}$
is the Hermitian inner product on $\mathcal{A}$.  The fundamental
$2$-form $\Omega$ of $g$ is given by
$\Omega(\rho)(\sigma,\tau) = -2\Im(\pair{\sigma}{\tau})$ for all
$\rho,\sigma,\tau\in \mathcal{A}$, where $\Im(t)$ denotes the
imaginary part of a complex number $t$.  Clearly $\diff \Omega = 0$.
Therefore, the Hermitian metric $g$ on $\mathcal{A}$ is K\"ahler.  For
each $a\in Q_0$, let $\Aut{V_a,h_a}$ denote the subgroup of
$\Aut[\C]{V_a}$ consisting of $\C$-automorphisms of $V_a$ which
preserve the Hermitian inner product $h_a$ on $V_a$, and $K$ the
compact subgroup $\prod_{a\in Q_0} \Aut{V_a,h_a}$.  The action of $K$
on $\mathcal{A}$ induced by that of $G$ preserves the Hermitian inner
product $\pair{\cdot}{\cdot}$, and hence the K\"ahler metric $g$ on
$\mathcal{A}$.  Similarly, the action of $K$ on $\Lie{G}$ induced by
that of $G$ preserves the Hermitian inner product on $\Lie{G}$.

The Lie algebra $\Lie{K}$ of $K$ is the real Lie subalgebra
$\bigoplus_{a\in Q_0} \End{V_a,h_a}$ of $\Lie{G}$, where, for each
$a\in Q_0$, $\End{V_a,h_a}$ is the real Lie subalgebra of
$\End[\C]{V_a}$ consisting of $\C$-endomorphisms $u$ of $V_a$ that are
skew-Hermitian with respect to $h_a$, that is,
\begin{equation*}
  \label{eq:324}
  h_a(u(x),y) + h_a(x,u(y)) = 0
\end{equation*}
for all $x,y\in V_a$.
The Hermitian inner product $\pair{\cdot}{\cdot}$ on $\Lie{G}$
restricts to a real inner product on $\Lie{K}$, which is given by
$\pair{\xi}{\eta} = -\sum_{a\in Q_0} \tr{\xi_a \circ \eta_a}$ for all
$\xi,\eta \in \Lie{K}$.

If $a,b\in Q_0$, and $f \in \Hom[\C]{V_a}{V_b}$, let
$f^*\in \Hom[\C]{V_b}{V_a}$ be the adjoint of $f$ with respect to the
Hermitian inner products $h_a$ and $h_b$ on $V_a$ and $V_b$,
respectively.  For every point $\rho$ of $\mathcal{A}$, and
$a\in Q_0$, and a rational weight
$\theta = (\theta_a)_{a \in Q_0} \in \Q^{Q_0}$ define an element
$L_\theta(\rho)_a$ of $\End{V_a,h_a}$ by
\begin{equation*}
  \label{eq:326}
  L_\theta(\rho)_a =
  \sqrt{-1} \Bigl(
  (\theta_a - \mu_\theta(d)) \id{V_a} +
  \sum_{\alpha\in \tg^{-1}(a)} \rho_\alpha \circ \rho_\alpha^* -
  \sum_{\alpha\in \sr^{-1}(a)}\rho_\alpha^* \circ \rho_\alpha
  \Bigr),
\end{equation*}
and let $L_\theta(\rho)$ be the element
$(L_\theta(\rho)_a)_{a\in Q_0}$ of $\Lie{K}$.  Define a map
$\Phi_\theta : \mathcal{A} \to \Lie{K}^*$ by
\begin{equation*}
  \label{eq:335}
  \Phi_\theta(\rho)(\xi) =
  \pair{\xi}{L_\theta(\rho)}
\end{equation*}
for all $\rho\in \mathcal{A}$ and $\xi\in \Lie{K}$, where
$\pair{\cdot}{\cdot}$ is the real inner product on $\Lie{K}$.

\begin{lemma} \cite[Lemma 6.6]{PMR}
  \label{lemma2}
  Let $\eta$ denote the element
  $\bigl(\sqrt{-1} (\theta_a - \mu_\theta(d)) \id{V_a}\bigr)_{a\in
    Q_0}$ of $\Lie{K}$, and $\alpha$ the element of $\Lie{K}^*$, which
  is defined by $\alpha(\xi) = \pair{\xi}{\eta}$ for all
  $\xi\in \Lie{K}$.  Then,
  \begin{equation*}
    \label{eq:343}
    \Phi_\theta(\rho)(\xi) =
    \frac{1}{2}\Omega(\xi^\sharp(\rho),\rho) +
    \alpha(\xi)
  \end{equation*}
  for all $\rho\in \mathcal{A}$ and $\xi\in \Lie{K}$.  In particular,
  $\Phi_\theta$ is a moment map for the action of $K$ on
  $\mathcal{A}$.
\end{lemma}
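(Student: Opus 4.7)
The formula reduces to a direct trace computation, after which the moment map assertion follows from Lemma \ref{lemma1} once $K$-invariance of $\alpha$ is verified. I would begin by computing the fundamental vector field: differentiating the right action $(\rho g)_\alpha = g_{\tg(\alpha)}^{-1} \circ \rho_\alpha \circ g_{\sr(\alpha)}$ at $g = e$ in the direction $\xi \in \Lie{K}$ gives
\[
  \xi^\sharp(\rho)_\alpha = \rho_\alpha \circ \xi_{\sr(\alpha)} - \xi_{\tg(\alpha)} \circ \rho_\alpha, \qquad \alpha \in Q_1.
\]

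Next I would expand $\pair{\xi^\sharp(\rho)}{\rho} = \sum_{\alpha \in Q_1}\tr(\xi^\sharp(\rho)_\alpha \circ \rho_\alpha^*)$ by substituting the above formula and using cyclic invariance of the trace to move $\xi_{\sr(\alpha)}$ and $\xi_{\tg(\alpha)}$ to the front of each summand. Reindexing the resulting arrow-sum as a sum over vertices via the partitions $Q_1 = \bigsqcup_a \sr^{-1}(a) = \bigsqcup_a \tg^{-1}(a)$ yields
\[
  \pair{\xi^\sharp(\rho)}{\rho} = -\sum_{a\in Q_0}\tr\Bigl(\xi_a \circ \Bigl[\sum_{\alpha \in \tg^{-1}(a)} \rho_\alpha \circ \rho_\alpha^* - \sum_{\alpha \in \sr^{-1}(a)}\rho_\alpha^* \circ \rho_\alpha\Bigr]\Bigr).
\]
By the definition of $L_\theta(\rho)_a$, the bracketed operator equals $-\sqrt{-1}\,L_\theta(\rho)_a - (\theta_a - \mu_\theta(d))\id{V_a}$, and since $\pair{\xi}{L_\theta(\rho)} = -\sum_a \tr(\xi_a \circ L_\theta(\rho)_a)$, the expression collapses to
\[
  \pair{\xi^\sharp(\rho)}{\rho} = -\sqrt{-1}\,\pair{\xi}{L_\theta(\rho)} + \sum_{a\in Q_0}(\theta_a - \mu_\theta(d))\tr(\xi_a).
\]

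Finally, I would split into real and imaginary parts. Since each $\xi_a$ is skew-Hermitian, $\tr(\xi_a)$ is purely imaginary, whereas $\pair{\xi}{L_\theta(\rho)}$ is real; on the other hand, unpacking $\alpha(\xi) = \pair{\xi}{\eta}$ via the definition of the inner product on $\Lie{K}$ shows that $\alpha(\xi)$ equals precisely the real number $\sum_{a}(\theta_a - \mu_\theta(d))\tr(\xi_a)/\sqrt{-1}$. Applying $-2\Im$ to the preceding display and rearranging then gives
\[
  \tfrac{1}{2}\Omega(\xi^\sharp(\rho),\rho) + \alpha(\xi) = \pair{\xi}{L_\theta(\rho)} = \Phi_\theta(\rho)(\xi),
\]
which is the first claim. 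The moment map statement is then immediate from Lemma \ref{lemma1}, since $\eta$ lies in the center of $\Lie{K}$ (each component is a scalar multiple of the identity of $V_a$), hence is $\Ad$-invariant, so $\alpha$ is a $K$-invariant element of $\Lie{K}^*$. The only delicate step is keeping the sign conventions between the trace pairing on $\mathcal{A}$ and the real inner product on $\Lie{K}$ consistent when reorganising the arrow-sum into a vertex-sum; everything else is mechanical.
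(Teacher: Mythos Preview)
The paper does not actually prove this lemma; it is quoted from \cite[Lemma 6.6]{PMR} and stated without proof, so there is no argument in the paper to compare against. Your computation is correct and is the natural direct verification: the formula for $\xi^\sharp(\rho)_\alpha$, the regrouping of the arrow-sum into a vertex-sum, the identification of the bracketed operator with $-\sqrt{-1}L_\theta(\rho)_a - (\theta_a-\mu_\theta(d))\id{V_a}$, and the extraction of imaginary parts all check out with the sign conventions the paper fixes for $\Omega$ and for the real inner product on $\Lie{K}$. The final appeal to Lemma~\ref{lemma1}, together with the observation that $\eta$ is central so that $\alpha$ is $K$-invariant, is exactly what is needed.
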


Let $H = \set{(c \id{V_a})_{a\in Q_0} \suchthat c \in \units{\C}}$ be
the central complex Lie subgroup of $G$, $\bar{G}$ the complex Lie
group $H\backslash G$, and $\pi: G \to \bar{G}$ the canonical
projection.  Let $\bar{K}$ be the compact subgroup $\pi(K)$ of
$\bar{G}$, and $\pi_K : K \to \bar{K}$ the homomorphism of real Lie
groups induced by $\pi$.  The subset $H\cap K$ of $G$ is a real Lie
subgroup of $G$, and $\Lie{H\cap K}$ equals the real Lie subalgebra
$\Lie{H}\cap \Lie{K}$ of $\Lie{G}$.  The map
$\tang[e]{\pi} : \Lie{G} \to \Lie{\bar{G}}$ is a surjective
homomorphism of complex Lie algebras with kernel $\Lie{H}$, and
$\tang[e]{\pi_K} : \Lie{K} \to \Lie{\bar{K}}$ is a surjective
homomorphism of real Lie algebras with kernel $\Lie{H\cap K}$.

The action of $\bar{K}$ on $\mathcal{A}$ induced by that of $\bar{G}$
on $\mathcal{A}$ preserves the K\"ahler metric $g$ on $\mathcal{A}$. 
Also, there exists a unique map
$\bar{\Phi} : \mathcal{A} \to \Lie{\bar{K}}^*$, such that
\begin{equation}
	\label{eq:1}
\Phi(x) = \bar{\Phi}(x) \circ \tang[e]{\pi_K}		\tag{1.1}
\end{equation}
for all $x\in \mathcal{A}$ (see \cite[proof of Corollary 6.4]{PMR}). 
The map $\bar{\Phi}$ is a moment map for the action of $\bar{K}$ on
$\mathcal{A}$.

\section{Properness of Moment maps}
	\label{sec2}
In this section, we give the main result of this note. We show that
the moment map is proper if and only if the quiver does not contain
any cycle.

We retain the notation used in Section \ref{sec1}.  Thus,
$Q = (Q_0, Q_1, s, t)$ is a non-empty finite quiver,
$d=(d_a)_{a\in Q_0}$ a non-zero element of $\N^{Q_0}$, and
$V=(V_a)_{a\in Q_0}$ a fixed family of $\C$-vector spaces, such that
$\dim[\C]{V_a}=d_a$ for all $a\in Q_0$.  The representation space
$\bigoplus_{\alpha\in Q_1}\Hom[\C]{V_{\sr(\alpha)}}{V_{\tg(\alpha)}}$
is denoted by $\mathcal{A}$ while the family
$(h_a)_{a \in Q_0}$ of Hermitian inner products
$h_a : V_a \times V_a \to \C$ is denoted by $h$.  In addition, a
rational weight $\theta \in Q^{Q_0}$ of $Q$ is also fixed.  Also,
recall that $G$ is the complex Lie group
$\prod_{a\in Q_0}\Aut[\C]{V_a}$, with its canonical holomorphic
linear right action on $\mathcal{A}$, and $K$ the compact subgroup
$\prod_{a\in Q_0} \Aut{V_a,h_a}$ of $G$, where, for each $a\in Q_0$,
$\Aut{V_a,h_a}$ is the subgroup of $\Aut[\C]{V_a}$ consisting of
$\C$-automorphisms of $V_a$ which preserve the Hermitian inner
product $h_a$ on $V_a$.

Let $\Phi = \Phi_\theta : \mathcal{A} \to \Lie{K}^*$ be the moment
map for the action of $K$ on $\mathcal{A}$ as given by Lemma
\eqref{lemma2}.

\subsection{A necessary condition for properness}
  \label{sec:2.1}
We start by observing that the moment map $\Phi$, in general, need
not be a proper map.  To see this, let us assume that the quiver $Q$
contains a loop at some vertex; let $\beta \in Q_1$ be a loop at a
vertex $b \in Q_0$. Also assume that $d_b \neq 0$.

For each $n \in \N$, let
$\rho(n) = (\rho_{\alpha}(n))_{\alpha \in Q_1}$ be the representation
of $Q$ defined by
\[ \rho_{\alpha}(n) =
  \begin{cases}
    0       & \quad \text{if } \alpha \neq \beta\\
    n\id{V_b}  & \quad \text{if } \alpha = \beta.
  \end{cases}
\]
We identify $\Lie{K}^*$ with $\Lie{K}$ using the canonical inner
product on $\Lie{K}$.  The element $\Phi_{\theta}(\rho)$ of
$\Lie{K}^*$ can then be identified with the element
$$L_\theta(\rho) =
  \sqrt{-1} \Bigl(
  (\theta_a - \mu_\theta(d)) \id{V_a} +
  \sum_{\alpha\in \tg^{-1}(a)} \rho_\alpha \circ \rho_\alpha^* -
  \sum_{\alpha\in \sr^{-1}(a)}\rho_\alpha^* \circ \rho_\alpha
  \Bigr)_{a \in Q_0}$$
of $\Lie{K}$.

Let us write
$\lambda_a = \theta_a - \mu_\theta(d)$ and
$A(\rho)_a =
 \underset{\alpha\in \tg^{-1}(a)}{\sum} \rho_\alpha \circ \rho_\alpha^* -
  \underset{\alpha\in \sr^{-1}(a)}{\sum}\rho_\alpha^* \circ \rho_\alpha$.
Then
$$L_\theta(\rho)_a =  \sqrt{-1} \left(\lambda_a\id{V_a} + A(\rho)_a\right)$$
for all $a \in Q_0$.  Clearly
$$A(\rho(n))_a = 0$$
for all $a \in Q_0$ and for all $n \in \N$.  Hence
$$\Phi(\rho(n)) = \sqrt{-1}(\lambda_a\id{V_a})_{a \in Q_0}$$
for all $n \in \N$.  Since $\set{\rho(n) \suchthat n \in \N}$ is an
unbounded set in $\mathcal{A}$,
$\Phi^{-1}\left(\sqrt{-1}(\lambda_a\id{V_a})_{a \in Q_0}\right)$
is not compact.  Hence $\Phi$ is not proper.

The above observation can be generalised to the case when the quiver
$Q$ does not contain any cycle.

\begin{proposition}
	\label{prop1}
 Suppose that $Q$ contains a cycle
 $\alpha_1 \alpha_2 \cdots \alpha_l$ of length $l \geq 1$, where
 $\alpha_i : a_{i-1} \to a_i$ for $i = 1, \cdots, l$ and $a_0 = a_l$.
 If $d_{a_i} \neq 0$ for each $i \in \set{1, \cdots, l}$, then the
 moment map $\Phi$ is not proper.
\end{proposition}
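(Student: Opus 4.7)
The plan is to generalize the loop construction given just above. For each vertex $a$ of the cycle (i.e.\ $a = a_i$ for some $i$) I would fix a unit vector $v_a \in V_a$, which exists because $d_a \neq 0$; since $a_0 = a_l$, the assignment $i \mapsto v_{a_i}$ is well-defined on $\set{0,1,\ldots,l}$. For each $n \in \N$ I then define $\rho(n) \in \mathcal{A}$ by setting $\rho(n)_\alpha = 0$ for every arrow $\alpha$ not lying on the cycle, and
\[
  \rho(n)_{\alpha_i}(x) = n \, h_{a_{i-1}}(x, v_{a_{i-1}}) \, v_{a_i}
\]
for $i = 1, \ldots, l$. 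Each $\rho(n)_{\alpha_i}$ is a rank-one operator whose Hilbert--Schmidt norm equals $n$, so $\norm{\rho(n)} \to \infty$ as $n \to \infty$.

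The key step is to show that $L_\theta(\rho(n))$ does not depend on $n$. Using that the adjoint of a rank-one map $x \mapsto h(x, v) w$ is $y \mapsto h(y, w) v$, a direct computation gives
\[
  \rho(n)_{\alpha_i} \circ \rho(n)_{\alpha_i}^* = n^2 P_{v_{a_i}}, \qquad \rho(n)_{\alpha_i}^* \circ \rho(n)_{\alpha_i} = n^2 P_{v_{a_{i-1}}},
\]
where $P_v$ denotes the orthogonal projection onto $\C v$. At a vertex $a$ not visited by the cycle, $A(\rho(n))_a = 0$ trivially; at a cycle vertex $a$,
\[
  A(\rho(n))_a = n^2 \Bigl( \abs{\set{i \in \set{1, \ldots, l} \suchthat a_i = a}} - \abs{\set{i \in \set{1, \ldots, l} \suchthat a_{i-1} = a}} \Bigr) P_{v_a}.
\]
The two cardinalities coincide, because a closed walk enters each vertex as often as it leaves (a direct consequence of $a_0 = a_l$). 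Hence $A(\rho(n)) = 0$ and $L_\theta(\rho(n)) = \sqrt{-1}(\lambda_a \id{V_a})_{a \in Q_0}$ is independent of $n$, so, exactly as in the loop case, the unbounded set $\set{\rho(n) \suchthat n \in \N}$ is contained in a single fibre of $\Phi$, which is therefore not proper.

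The only real subtlety beyond the loop argument is a cycle that revisits some vertex. My construction circumvents this by assigning a \emph{single} unit vector $v_a$ per cycle vertex, so that every local contribution at $a$ is a multiple of the same rank-one projection $P_{v_a}$; the combinatorial identity that a closed walk has equal in- and out-degree at every vertex then forces the desired cancellation. The rest of the argument — boundedness of $\Phi(\rho(n))$ versus unboundedness of $\norm{\rho(n)}$ — is essentially identical to the loop case already treated.
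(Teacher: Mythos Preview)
Your construction is essentially the paper's: both place a rank-one operator of norm $n$ on each arrow of the cycle (the paper does this in matrix form, setting $(\rho_{\alpha_k}(n))_{11}=n$ after fixing bases), set all other $\rho_\alpha$ to zero, and observe that $\Phi(\rho(n))$ is constant while $\norm{\rho(n)}\to\infty$. Your coordinate-free formulation via unit vectors and your explicit treatment of vertices visited more than once are in fact more careful than the paper, which simply asserts $A(\rho(n))_a=0$ without comment.

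One small point worth making explicit: your displayed formula for $A(\rho(n))_a$ counts indices $i\in\set{1,\ldots,l}$, but the sums defining $A(\rho)_a$ range over \emph{arrows} $\alpha\in\tg^{-1}(a)$ and $\alpha\in\sr^{-1}(a)$, so an arrow that occurs twice in the cycle still contributes only once. Your in-degree/out-degree cancellation is therefore only valid as written when the $\alpha_i$ are pairwise distinct; this is harmless, since any cycle contains a subcycle with distinct arrows and the hypothesis $d_{a_i}\neq 0$ is inherited, but it should be stated.
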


\proof For each $n \in \N$, we define an element
$\rho(n) = (\rho_{\alpha}(n))_{\alpha \in Q_1}$ of the representation
space $\mathcal{A}$ of $Q$ by setting
$$\rho_{\alpha}(n) = 0 \quad \text{if } \alpha \notin
\set{\alpha_1, \cdots, \alpha_l},$$
and
\[ (\rho_{\alpha_k}(n))_{ij} =
  \begin{cases}
    n       & \quad \text{if } i=j=1\\
    0       & \text{otherwise}
  \end{cases}
\]
for $1 \leq k \leq l$, $1 \leq i \leq d_{\tg(\alpha_k)}$, $1 \leq j \leq d_{\sr(\alpha_k)}$.  Here we have fixed bases for the vector spaces $V_a$ $(a \in Q_0)$, and identified
$\Hom[\C]{V_{\sr(\alpha)}}{V_{\tg(\alpha)}}$ with the space of
$d_{\tg(\alpha)} \times d_{\sr(\alpha)}$ matrices over the field of
complex numbers.  We then have
$$A(\rho(n))_a = 0$$
for all $a \in Q_0$ and for all $n \in \N$.  Hence
$$\Phi(\rho(n)) = \sqrt{-1}(\lambda_a\id{V_a})_{a \in Q_0}$$
for all $n \in \N$.  Thus, by the argument as before this proposition
$\Phi$ is not a proper map.  \qed

\subsection{Sufficiency of the condition}
  \label{sec:2.2}
In this subsection, we show that the condition of the Proposition
\eqref{prop1} is also sufficient.  Before we prove it, let us see, as
an example, that the moment map is proper when $Q$ is a Kronecker
$n$-quiver.

\begin{remark}
	\label{rem1}
 Let $A$ be an $n \times n$ Hermitian matrix.  Then, by the
 Cauchy-Schwarz inequality,
 $$n\cdot\tr{A^2} \geq (\tr{A})^2.$$
\end{remark}

\begin{lemma}
	\label{lemma3}
 Let $n$ be an integer $\geq 1$, and $Q$ the Kronecker $n$-quiver.
 Then, the moment map $\Phi$ is proper.
\end{lemma}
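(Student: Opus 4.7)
The plan is to establish properness by proving the implication
$\norm{\Phi(\rho)} \leq M \Limp \norm{\rho} \leq C(M)$ for some
constant $C(M)$ depending only on $M$.  Since $\Phi$ is continuous
and $\mathcal{A}$ is a finite-dimensional normed space, this is
enough: the preimage of a compact set will then be closed and
bounded, hence compact.

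For the Kronecker $n$-quiver we have $Q_0 = \set{1, 2}$ and all $n$
arrows $\alpha_1, \ldots, \alpha_n$ go from $1$ to $2$.  Writing
$\rho = (\rho_1, \ldots, \rho_n)$ with
$\rho_i \in \Hom[\C]{V_1}{V_2}$, the crucial structural observation
is that
\[
A(\rho)_2 = \sum_{i=1}^{n} \rho_i \circ \rho_i^{*}
\]
is a positive semi-definite Hermitian operator on $V_2$ (no arrow
starts at vertex $2$, so the subtracted term in the definition of
$A(\rho)_2$ is absent), and its trace equals
$\sum_i \tr{\rho_i \circ \rho_i^{*}} = \sum_i \pair{\rho_i}{\rho_i} = \norm{\rho}^2$.
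I would assume $d_2 > 0$ throughout, since $d_2 = 0$ forces
$\mathcal{A} = 0$ and the lemma becomes trivial.

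Concretely, suppose $\norm{\Phi(\rho)} \leq M$. Then in particular
$\norm{L_\theta(\rho)_2}^2 \leq M^2$, and setting $B = A(\rho)_2$
this unwinds to
\[
\tr{B^2} + 2\lambda_2 \tr{B} + \lambda_2^2 d_2 \leq M^2.
\]
Applying Remark \ref{rem1} to the Hermitian matrix $B$ on the
$d_2$-dimensional space $V_2$ gives
$\tr{B^2} \geq (\tr{B})^2/d_2$.  Substituting and setting
$t = \tr{B} = \norm{\rho}^2 \geq 0$ produces the quadratic inequality
$(t + \lambda_2 d_2)^2 \leq d_2 M^2$, and hence
$t \leq \sqrt{d_2}\, M + \abs{\lambda_2} d_2$, which is precisely
the required bound on $\norm{\rho}^2$.

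The computation itself is elementary and no serious obstacle is
expected; what deserves emphasis is \emph{why} the argument succeeds.
Positivity of $A(\rho)_2$ is a direct consequence of the fact that
no arrow of $Q$ starts at vertex $2$, i.e.\ of the absence of cycles.
This is exactly the structural feature whose failure in
Proposition \ref{prop1} destroys properness, and the same idea
applied to a suitably chosen vertex should be the engine of the
general sufficiency result in the next subsection.
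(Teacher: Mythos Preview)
Your argument is correct and rests on the same key ingredient as the paper's proof, namely Remark~\ref{rem1} applied to the Hermitian operator appearing in one component of $L_\theta(\rho)$.  The paper proceeds slightly differently: it expands the \emph{full} norm $\norm{\Phi(\rho)}^2$ using both vertices, obtaining
\[
\norm{\Phi(\rho)}^2 \geq (1/d_a + 1/d_b)\norm{\rho}^4 + 2(\lambda_b - \lambda_a)\norm{\rho}^2 + \lambda_a^2 d_a + \lambda_b^2 d_b,
\]
and reads off properness from the dominant quartic term.  Your version is a bit more economical in that it uses only the component at the sink vertex~$2$; the positive-semidefiniteness you exploit there is exactly dual to what the paper later uses in Proposition~\ref{prop2}, where the component at a \emph{source} vertex (no incoming arrows, found via Lemma~\ref{lemma5}) is isolated.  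So your closing remark is on the mark: the single-vertex estimate you give here is precisely the engine of the general inductive step, and in that sense your proof of the lemma anticipates the structure of the sufficiency argument more transparently than the two-vertex computation in the paper.
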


\proof Let $Q = (Q_0, Q_1, s, t)$, where $Q_0 = \set{a, b}$,
$Q_1 = \set{1, \cdots, n}$, $\sr(j) = a$, $\tg(j) = b$ for all
$j \in Q_1$. Then,
$$\mathcal{A} = \displaystyle\bigoplus_{j = 1}^{n}\Hom[\C]{V_a}{V_b}
\quad \text{and} \quad G = \Aut{V_a} \times \Aut{V_b},$$
and the moment map is given by
\begin{equation*}
\Phi(\rho) = \sqrt{-1} \left( \lambda_a \id{V_a} -
\sum_{j = 1}^{n} {\rho_j}^* \circ \rho,~~
\lambda_b\id{V_b} + \sum_{j = 1}^{n} {\rho_j \circ {\rho_j}^*} \right)
\end{equation*}
for $\rho = (\rho_1, \cdots, \rho_n) \in \mathcal{A}$.  Therefore we get,

\begin{equation*}
\norm{\Phi(\rho)}^2 = \tr{(\sum_{j=1}^{n} \rho_j^* \circ \rho_j)^2} +
\tr{(\sum_{j=1}^{n} \rho_j \circ \rho_j^*)^2} +
2(\lambda_b - \lambda_a)(\sum_{j=1}^{n}\tr{\rho_j^* \circ \rho_j}) +
\lambda_a^2d_a + \lambda_b^2d_b.
\end{equation*}

If either $d_a = 0$ or, $d_b = 0$ then $\mathcal{A} = 0$, and we have
nothing to prove.  So, we assume that $d_a, d_b \neq 0$.  Using Remark
\eqref{rem1}, and the fact that
$\norm{\rho}^2 = \sum_{j=1}^{n}\tr{\rho_j^* \circ \rho_j}
	= \sum_{j=1}^{n}\tr{\rho_j \circ \rho_j^*}$,
we obtain the inequality
\begin{equation*}
\norm{\Phi(\rho)}^2 \geq (1/d_a + 1/d_b)\norm{\rho}^4 +
2(\lambda_b - \lambda_a)\norm{\rho}^2 + \lambda_a^2d_a + \lambda_b^2d_b.
\end{equation*}

Thus we see that every unbounded set in $\mathcal{A}$ is mapped to an
unbounded set in $\Lie{K}$ by the map $\Phi$. Hence $\Phi$ is a
proper map.
 \qed

\begin{lemma}
	\label{lemma5}
 Suppose that $Q$ is arrow-finite, and acyclic.  Then,
 there exists an arrow $\beta \in Q_1$ such that
 $\sr(\beta) \neq \tg(\alpha)$ for all $\alpha \in Q_1$. 
\end{lemma}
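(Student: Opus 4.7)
The plan is to argue by contradiction via a backward-walk argument, using the fact that $Q_1$ is finite to force a repetition. Assume, contrary to the conclusion, that for every arrow $\gamma \in Q_1$ there exists some $\alpha \in Q_1$ with $\tg(\alpha) = \sr(\gamma)$; in other words, the source of every arrow is hit as the target of some arrow. (We also tacitly assume $Q_1 \neq \emptyset$, since otherwise no such $\beta$ can exist at all; in practice the lemma is invoked when there is at least one arrow to produce.)

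Starting from any fixed $\alpha_1 \in Q_1$, I would construct a sequence $(\alpha_k)_{k \geq 1}$ in $Q_1$ by choosing, at each step, some $\alpha_{k+1} \in Q_1$ with $\tg(\alpha_{k+1}) = \sr(\alpha_k)$; such a choice is available by the standing assumption. Because $Q_1$ is finite, the pigeonhole principle yields indices $i < j$ with $\alpha_i = \alpha_j$. I would then concatenate the arrows in the order opposite to the one in which they were produced: the string $\alpha_j \alpha_{j-1} \cdots \alpha_{i+1}$ is a path of length $j-i \geq 1$, since by construction $\tg(\alpha_{k+1}) = \sr(\alpha_k)$ for each intermediate $k$. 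Its source is $\sr(\alpha_j)$ and its target is $\tg(\alpha_{i+1}) = \sr(\alpha_i)$; since $\alpha_i = \alpha_j$, these agree, so the path is a cycle. This contradicts the acyclicity of $Q$, completing the proof.

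The only subtle point, and the one to execute carefully, is keeping track of the direction of composition: the arrows are generated backward (from target toward earlier sources), but the path in $Q$ in the sense of Section \ref{sec:1.1} is read in the opposite order, $\alpha_j \alpha_{j-1} \cdots \alpha_{i+1}$. Once that bookkeeping is right, the argument is just finiteness plus pigeonhole, and there is no further obstacle.
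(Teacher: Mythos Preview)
Your proof is correct and follows essentially the same backward-walk idea as the paper's. The paper phrases it as a direct construction that must terminate (showing at each step that the new source vertex is distinct from all earlier ones, hence the arrows are distinct, hence the walk cannot continue forever by arrow-finiteness), whereas you phrase it as a contradiction via pigeonhole on the arrows and then extract the cycle explicitly; these are two presentations of the same argument, and your bookkeeping on the path direction is handled correctly.
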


\begin{proof}
Let us start with an arbitrary arrow $\alpha \in Q_1$.  Rename
$\alpha$ to $\alpha_1$ and say $\alpha_1 : a_1 \to a_0$.  Since $Q$
has no cycles, in particular no loops,
$\sr(\alpha_1) = a_1 \neq a_0 = \tg(\alpha_1)$.  If
$\sr(\alpha_1) \neq \tg(\alpha)$ for all
$\alpha \in Q_1 \setminus \set{\alpha_1}$ then we are done.  If
$\sr(\alpha_1) = \tg(\alpha)$ for some
$\alpha \in Q_1 \setminus \set{\alpha_1}$, rename $\alpha$ to
$\alpha_2$ and say $\alpha_2 : a_2 \to a_1$.

Now $a_2 = a_1 \Limp \alpha_2$ is a loop, and
$a_2 = a_0 \Limp \alpha_2\alpha_1$ is a cycle of length $2$.
Therefore, by the hypothesis
$\sr(\alpha_2) \neq \tg(\alpha_1),\ \tg(\alpha_2)$.

If $\sr(\alpha_2) \neq \tg(\alpha)$ for all
$\alpha \in Q_1 \setminus \set{\alpha_1, \alpha_2}$ then we are done.
If $\sr(\alpha_1) = \tg(\alpha)$ for some
$\alpha \in Q_1 \setminus \set{\alpha_1, \alpha_2}$, rename $\alpha$
to $\alpha_3$ and say $\alpha_3 : a_3 \to a_2$.

Continuing this process, after a finite steps we arrive at the
desired result because the set $Q_1$ is finite.  For, otherwise, by
repeating the above procedure we get a path of infinite length, which
contradicts the assumption that $Q_1$ is a finite set.
\end{proof}

\begin{proposition}
	\label{prop2}
 Let $Q = (Q_0, Q_1, s, t)$ be an acyclic finite quiver.  Then the
 moment map $\Phi$ is proper.
\end{proposition}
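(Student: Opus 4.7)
The plan is to prove properness by showing that on any set of the form $\Phi^{-1}(C)$ with $C \subseteq \Lie{K}^*$ bounded, every $\|\rho_\alpha\|$ remains bounded; together with continuity of $\Phi$ this yields compactness of $\Phi^{-1}(K)$ for compact $K$. Identifying $\Lie{K}^*$ with $\Lie{K}$ via the canonical inner product, $\Phi(\rho)$ corresponds to $L_\theta(\rho) = \sqrt{-1}\,(M_a(\rho))_{a \in Q_0}$, where $M_a(\rho) = \lambda_a \id{V_a} + T_a(\rho) - B_a(\rho)$ with the positive semi-definite Hermitian operators $T_a(\rho) = \sum_{\alpha \in \tg^{-1}(a)} \rho_\alpha \circ \rho_\alpha^*$ and $B_a(\rho) = \sum_{\alpha \in \sr^{-1}(a)} \rho_\alpha^* \circ \rho_\alpha$. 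A bound on $\|\Phi(\rho)\|$ is equivalent to a Frobenius-norm bound on each $M_a(\rho)$.

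To exploit acyclicity, I would enumerate the vertices as $a_1, a_2, \ldots, a_n$ in topological order so that every arrow of $Q$ goes from some $a_i$ to some $a_j$ with $i < j$. Such an ordering is produced by iteratively applying Lemma \ref{lemma5}: at each stage the lemma yields a vertex whose incoming-arrow set is empty, which can be removed together with its outgoing arrows to leave a strictly smaller acyclic finite quiver, and the process terminates because $Q_0$ is finite.

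With this order in hand, I would prove by induction on $i$ that $\|\rho_\alpha\|_F$ remains bounded on $\Phi^{-1}(C)$ for every $\alpha \in \sr^{-1}(a_i)$. For $i = 1$, the vertex $a_1$ is a source, so $T_{a_1}(\rho) = 0$ and $M_{a_1}(\rho) = \lambda_{a_1}\id{V_{a_1}} - B_{a_1}(\rho)$; the triangle inequality bounds $\|B_{a_1}\|_F$, and Remark \ref{rem1} applied to the positive semi-definite $B_{a_1}$ gives $\tr{B_{a_1}} \leq \sqrt{d_{a_1}}\,\|B_{a_1}\|_F$, which controls $\sum_{\alpha \in \sr^{-1}(a_1)} \|\rho_\alpha\|_F^2 = \tr{B_{a_1}}$ and hence each individual summand. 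For the step at $a_i$, every arrow $\alpha \in \tg^{-1}(a_i)$ has source $a_j$ with $j < i$ and is already handled, so sub-multiplicativity of the Frobenius norm (together with $\|\rho_\alpha \circ \rho_\alpha^*\|_F \le \|\rho_\alpha\|_F^2$) gives $\|T_{a_i}(\rho)\|_F \leq \sum_{\alpha \in \tg^{-1}(a_i)} \|\rho_\alpha\|_F^2$ bounded. Combined with $\|M_{a_i}\|_F$ bounded and the triangle inequality, this bounds $\|B_{a_i}\|_F$, and Remark \ref{rem1} again bounds $\tr{B_{a_i}}$ and therefore each $\|\rho_\alpha\|_F$ for $\alpha \in \sr^{-1}(a_i)$. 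Since every arrow of $Q$ originates at some $a_i$, the induction terminates with $\|\rho\|_F$ bounded on $\Phi^{-1}(C)$.

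The main obstacle is essentially conceptual and is already packaged into Lemma \ref{lemma5}: once acyclicity is translated into a topological ordering of vertices, the analytic content is a vertex-by-vertex replay of Lemma \ref{lemma3}, in which at each step the ``feed-in'' term $T_{a_i}(\rho)$ has already been tamed by the previously bounded arrows. This is precisely what breaks in the cyclic setting of Proposition \ref{prop1}: along an oriented cycle the contributions to $T$ and $B$ can cancel at every vertex, allowing $\rho$ to escape to infinity while $\Phi$ stays bounded.
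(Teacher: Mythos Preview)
Your argument is correct, and it is a genuine variant of the paper's proof rather than a restatement of it. The paper proceeds by induction on $\card{Q_1}$: using Lemma~\ref{lemma5} it singles out one arrow $\beta$ whose source has no incoming arrows, deletes $\beta$ to obtain a subquiver $Q'$ with one fewer arrow, invokes the induction hypothesis to get properness of the restricted moment map $\Psi = \restrict{\Phi}{\mathcal{B}}$ on the subspace $\mathcal{B}$, and then shows that a bound on $\Phi(\rho)$ forces first a bound on $\rho_\beta$ (via Remark~\ref{rem1} at the vertex $b = \sr(\beta)$) and then, through $\Psi$, a bound on the remaining coordinates $\pi(\rho)$. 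You instead fix the quiver once and for all, extract from Lemma~\ref{lemma5} a full topological ordering of $Q_0$, and run the induction over vertices: at each $a_i$ the incoming term $T_{a_i}(\rho)$ is already controlled, so a bound on $\norm{M_{a_i}(\rho)}$ yields a bound on $\norm{B_{a_i}(\rho)}$ and hence, via Remark~\ref{rem1}, on $\tr{B_{a_i}(\rho)} = \sum_{\alpha \in \sr^{-1}(a_i)} \norm{\rho_\alpha}^2$. Your route is slightly more elementary in that it never needs to identify $\mathcal{B}$ with the representation space of a subquiver or to recognise $\restrict{\Phi}{\mathcal{B}}$ as a moment map in its own right; the paper's route, on the other hand, packages the analysis so that only a single arrow has to be estimated explicitly at each step, the rest being absorbed into the black box ``$\Psi$ is proper''. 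The analytic core---Remark~\ref{rem1} applied at a vertex with no incoming arrows---is identical in both.
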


\proof The proof is by induction on the cardinality $\card{Q_1}$ of
$Q_1$.  If $\card{Q_1} = 1$, then the acyclic quiver $Q$ can be
thought of as a Kronecker $1$-quiver with some extra vertices which do
not contribute to the representation space
$\mathcal{A} = \Hom[C]{V_b}{V_c}$ of $Q$.  In fact, if
$Q_1 = \set{\alpha}$, $\alpha : b \to c$, $b,c \in Q_0$, then the
moment map is given by
\[ \Phi(\rho)_a =
  \begin{cases}
   \sqrt{-1}\lambda_a\id{V_a} & \quad \text{if } a \in Q_0 \setminus \set{b, c}\\
   \sqrt{-1} \left( \lambda_b \id{V_b} -
   \rho^* \circ \rho \right)       & \quad \text{if } a = b\\
   \sqrt{-1} \left( \lambda_c \id{V_c} +
   \rho \circ \rho^* \right)      & \quad \text{if } a = c
  \end{cases}
\]
for $\rho \in \mathcal{A}$.  Thus by the method of Lemma\eqref{lemma3}, it follows that $\Phi$ is proper.

Let us assume that the proposition holds if $\card{Q_1} < n$.  We
want to show that the statement is true if $\card{Q_1} = n$. 

Let $\beta \in Q_1$ be a fixed arrow and $\mathcal{B}$ the subspace
$\sum_{\alpha \in Q_1 \setminus \set{\beta}}\Hom[C]{V_{\sr(\alpha)}}
{V_{\tg(\alpha)}}$ of $\mathcal{A}$.  Then, $\mathcal{B}$ is
$K$-invariant, $\restrict{\Omega}{\mathcal{B}}$ is a K\"ahler form on
$\mathcal{B}$, and $\Psi = \restrict{\Phi}{\mathcal{B}}$ is a moment
map for the $K$-action on $\mathcal{B}$.

Let $Q' = (Q_0', Q_1', s', t')$ be the quiver defined by
$Q_0' = Q_0$, $Q_1' = Q_1 \setminus \set{\beta}$, $s' =
\restrict{s}{Q_1'}$, and $t' = \restrict{t}{Q_1'}$.  Then, the
representation space of $Q'$ with dimension vector $d$ is canonically
identified with $\mathcal{B}$ as a K\"ahler $K$-manifold.  Therefore,
by the induction  hypothesis, $\Psi$ is proper.

Now assume that $W$ is a compact set in $\Lie{K}$.  We shall show
that $\Phi^{-1}(W)$ is compact in $\mathcal{A}$.  Since $\mathcal{A}$
and $\Lie{K}$ are finite dimensional normed spaces, there exists
$M \in \R$ such that
\begin{align*}
\norm{\Phi(\rho)} &\leq M \quad \text{for all } \rho \in \Phi^{-1}(W)\\
  \Rightarrow \norm{\Phi(\rho)_a} &\leq M \quad \text{for all }
	\rho \in \Phi^{-1}(W) \quad \text{and for all } a \in Q_0.
\end{align*}
We also have the relation
$$\Phi(\rho)_a = \Psi(\pi(\rho))_a + k(a)(\rho_{\beta}^* \circ \rho_{\beta})$$
for each $\rho \in \mathcal{A}$, where 
\[ k(a) =
  \begin{cases}
    0  & \quad \text{if } a \neq b, c\\
    -1 & \quad \text{if } a = b\\
    1 & \quad \text{if } a = c,
  \end{cases}
\]
and $\pi : \mathcal{A} \to \mathcal{B}$ is the canonical projection. 
Thus $\norm{\Psi(\pi(\rho))_a} \leq M + \norm{\rho_{\beta}}^2$ for
all $\rho \in \Phi^{-1}(W)$ and for all $a \in Q_0$.

Let us choose $\beta$ to be that arrow in $Q$ such that
$\sr(\beta) \neq \tg(\alpha)$ for all $\alpha \in Q_1$, which is
possible by virtue of the Lemma \eqref{lemma5}.  We then have
$$\Phi(\rho)_b = \sqrt{-1} \Bigl( \lambda_b \id{V_b} -
  \sum_{\alpha\in \sr^{-1}(b)}\rho_\alpha^* \circ \rho_\alpha \Bigr).
$$
Therefore, using Remark \eqref{rem1}, we get
$$d_b \norm{\Phi(\rho)_b}^2 \geq \Bigl( \lambda_b d_b -
  \sum_{\alpha\in \sr^{-1}(b)}\norm{\rho_\alpha}^2\Bigr)^2.$$
Thus we see that for $\Phi(\rho)$ to lie inside a compact set it is
necessary that $\rho_\beta$ lies inside a bounded set.  Hence we get
$\Psi(\pi(\rho))_a \leq M_1$ for some $M_1 \in \R$ and for all
$a \in Q_0$, $\rho \in \Phi^{-1}(W)$.

Since $\Psi$ is proper, $\norm{\pi(\rho)} \leq M_2$ for some
$M_2 \in \R$ and for all $\rho \in \Phi^{-1}(W)$.  But
$$\norm{\rho}^2 = \norm{\pi(\rho)}^2 + \norm{\rho_\beta}^2$$
for all $\rho \in \mathcal{A}$.  Therefore $\Phi^{-1}(W)$ is a
bounded set in $\mathcal{A}$ and is hence compact.  This completes
the proof. \qed

\begin{corollary}
 Suppose that $d_a > 0$ for all $a \in Q_0$.  Then, the moment map
 $\bar{\Phi} : \mathcal{A} \to \Lie{\bar{K}}^*$ for the $\bar{K}$
 action on $\mathcal{A}$ is proper if and only if $Q$ is acyclic.\end{corollary}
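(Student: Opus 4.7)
The plan is to deduce both directions of the corollary from Propositions \ref{prop1} and \ref{prop2} by means of the factorization recorded in \eqref{eq:1}.  Writing $\tau : \Lie{\bar{K}}^* \to \Lie{K}^*$ for the transpose of $\tang[e]{\pi_K}$, that is, $\varphi \mapsto \varphi \circ \tang[e]{\pi_K}$, equation \eqref{eq:1} says exactly $\Phi = \tau \circ \bar{\Phi}$.  My strategy is first to establish that $\Phi$ and $\bar{\Phi}$ are simultaneously proper, and then to combine this topological equivalence with the two propositions.

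The first step is to observe that $\tau$ is an injective $\R$-linear map between finite-dimensional real normed spaces.  Injectivity is immediate from the surjectivity of $\tang[e]{\pi_K}$ noted in Section \ref{sec1}: any linear functional on $\Lie{\bar{K}}$ that vanishes on the image of $\tang[e]{\pi_K}$ must be zero.  Any such injective linear map between finite-dimensional normed spaces is automatically a closed topological embedding and a proper map, a homeomorphism onto its closed image.

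From this, the equivalence of properness for $\Phi$ and $\bar{\Phi}$ follows by a brief diagram chase.  Given a compact $C \subseteq \Lie{\bar{K}}^*$, injectivity of $\tau$ gives $\bar{\Phi}^{-1}(C) = \Phi^{-1}(\tau(C))$, and $\tau(C)$ is compact; conversely, given a compact $D \subseteq \Lie{K}^*$, properness of $\tau$ gives that $\tau^{-1}(D)$ is compact, and $\Phi^{-1}(D) = \bar{\Phi}^{-1}(\tau^{-1}(D))$.  Hence $\Phi$ is proper if and only if $\bar{\Phi}$ is proper.

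It then remains to invoke the two propositions.  For the ``if'' direction, if $Q$ is acyclic, Proposition \ref{prop2} gives that $\Phi$ is proper, hence so is $\bar{\Phi}$.  For the ``only if'' direction, if $Q$ contains a cycle $\alpha_1 \cdots \alpha_l$, then the hypothesis $d_a > 0$ for every $a \in Q_0$ ensures that $d_{a_i} \neq 0$ at every vertex of the cycle, so Proposition \ref{prop1} applies and $\Phi$ is not proper; the equivalence above then forces $\bar{\Phi}$ to be non-proper.  The only point demanding any care is the topological transfer of properness across $\tau$, which is routine point-set topology in finite-dimensional vector spaces; all the genuine analytic content has already been absorbed into Propositions \ref{prop1} and \ref{prop2}, so there is no substantial obstacle remaining.
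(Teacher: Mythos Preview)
Your proof is correct and follows essentially the same approach as the paper: factor $\Phi = \tau \circ \bar{\Phi}$ via the transpose of $\tang[e]{\pi_K}$, use surjectivity of $\tang[e]{\pi_K}$ to get that $\tau$ is an injective (hence proper) linear map between finite-dimensional spaces, conclude that $\Phi$ and $\bar{\Phi}$ are simultaneously proper, and then invoke Propositions \ref{prop1} and \ref{prop2}. Your write-up is in fact slightly more explicit than the paper's in spelling out the diagram chase and in noting precisely where the hypothesis $d_a > 0$ enters (to make Proposition \ref{prop1} applicable along any cycle).
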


\proof By equation \eqref{eq:1}, we have
$\Phi(x) = \bar{\Phi}(x) \circ \tang[e]{\pi_K}$ for all
$x \in \mathcal{A}$.  This implies that the diagram
\[
  \begin{tikzcd}
    \mathcal{A} \arrow{r}{\bar{\Phi}} \arrow[swap]{dr}{\Phi} &
    {\Lie{\bar{K}}^*} \arrow{d} {{\tang[e]{\pi_K}}^*} \\
     & {\Lie{K}}^*,
  \end{tikzcd}
\]
where ${\tang[e]{\pi_K}}^*$ is the dual map of $\tang[e]{\pi_K}$, is
commutative.  Since ${\Lie{K}}^*$ and $\Lie{\bar{K}}^*$ are finite
dimensional vector spaces and $\tang[e]{\pi_K}$ is surjective,
${\tang[e]{\pi_K}}^*$ is injective with a closed image and hence is a
proper map.  Since $\Lie{\bar{K}}^*$ is Hausdorff, it follows that
$\bar{\Phi}$ is proper if and only if $\Phi$ is proper.  Therefore,
$\bar{\Phi}$ is proper if and only if $Q$ is acyclic.  \qed

\vspace{.7 cm}

\noindent \textbf{Acknowledgment.}  The author would like to thank his supervisor Prof.\ N.\ Raghavendra for many fruitful discussions. 
He is also grateful to him for going through an earlier draft and
pointing out errors.

\end{document}